\begin{document}

\newtheorem{theorem}{Theorem}[section]
\newtheorem{definition}[theorem]{Definition}
\newtheorem{lemma}[theorem]{Lemma}
\newtheorem{remark}[theorem]{Remark}
\newtheorem{proposition}[theorem]{Proposition}
\newtheorem{corollary}[theorem]{Corollary}
\newtheorem{example}[theorem]{Example}

\numberwithin{equation}{section}
\newcommand{\ep}{\varepsilon}
\newcommand{\R}{{\mathbb  R}}
\newcommand\C{{\mathbb  C}}
\newcommand\Q{{\mathbb Q}}
\newcommand\Z{{\mathbb Z}}
\newcommand{\N}{{\mathbb N}}

\title[A unified formulation of completely integrable systems]{On a unified formulation of completely integrable systems}

\author{R\u{a}zvan M. Tudoran}

\address{The West University of Timi\c soara,\\
Faculty of Mathematics and Computer Science, \\
Department of Mathematics,\\
Blvd. Vasile P\^ arvan, No. 4,\\
300223 - Timi\c soara,\\
Romania.}

\email{tudoran@math.uvt.ro}

\subjclass{37J35; 37K10; 70H05; 70H06.}

\keywords{integrable systems; Hamiltonian dynamics; linear normal forms.}

\date{June 23, 2011}
\dedicatory{}

\begin{abstract}
The purpose of this article is to show that a $\mathcal{C}^1$ differential system on $\R^n$ which admits a set of $n-1$ independent $\mathcal{C}^2$ conservation laws defined on an open subset $\Omega\subseteq \R^n$, is essentially $\mathcal{C}^1$ equivalent on an open and dense subset of $\Omega$, with the linear differential system $u^\prime_1=u_1, \ u^\prime_2=u_2, \dots, \ u^\prime_n=u_n$. The main results are illustrated in the case of two concrete dynamical systems, namely the three dimensional Lotka-Volterra system, and respectively the Euler equations from the free rigid body dynamics.  
\end{abstract}

\maketitle

\section{Introduction}
\label{section:one}

Recently, in \cite{llibre} it is proved that an integrable $\mathcal{C}^1$ planar differential system is roughly speaking $\mathcal{C}^1$equivalent to the linear differential system $u^\prime_1=u_1, \ u^\prime_2=u_2$. 

The purpose of this article is to generalize this result in the $n$ dimensional case for a $\mathcal{C}^1$ differential system that admits a set of $n-1$ independent conservation laws. In the second section we show that such a system can always be realized as a Hamilton-Poisson dynamical system on a full measure open subset of $\R^n$ with respect to a rank 2 Poisson structure. In the third section a new time transformation will be explicitly constructed in order to bring the system to a linear differential system of the type $u^\prime_1=u_1, \ u^\prime_2=u_2, \dots, \ u^\prime_n=u_n$. In the last section we illustrate the main results in the case of two concrete dynamical systems, namely the three dimensional Lotka-Volterra system, and respectively the Euler equations from the free rigid body dynamics. 

For details on Poisson geometry and Hamiltonian dynamics, see, e.g. \cite{abraham}, \cite{arnoldcarte}, \cite{marsdenratiu}, \cite{holm1}, \cite{holm2}, \cite{holm3}, \cite{ratiurazvan}.

\section{Hamiltonian divergence free vector fields naturally associated to integrable systems}

In this section we give a method to construct a Hamilton-Poisson divergence free vector field, naturally associated with a given  Hamilton-Poisson realization of a $n$ dimensional differential system admitting $n-1$ independent conservation laws.

First step in this approach is to construct a Hamilton-Poisson realization of a given $n$ dimensional differential system admitting $n-1$ independent integrals of motion.

Let us consider a $\mathcal{C}^1$ differential system on $\R^n$:
\begin{equation}\label{sys}
\left\{\begin{array}{l}
\dot x_{1}=X_1(x_1,\dots,x_n)\\
\dot x_{2}=X_2(x_1,\dots,x_n)\\
\cdots\\
\dot x_{n}=X_n(x_1,\dots,x_n),\\
\end{array}\right.
\end{equation}
where $X_1,X_2,\dots,X_n\in\mathcal{C}^{1}(\R^n,\R)$ are arbitrary real functions. Suppose that $C_1,\dots,C_{n-2},C_{n-1}:\Omega\subseteq \R^n\rightarrow \R$ are $n-1$ independent $\mathcal{C}^2$ integrals of motion of \eqref{sys} defined on a nonempty open subset $\Omega\subseteq \R^n$.

Since $C_1,\dots,C_{n-2},C_{n-1}:\Omega\subseteq \R^n\rightarrow \R$ are integrals of motion of the vector field $X=X_1\partial_{x_1}+\dots+X_n \partial_{x_n}\in\mathfrak{X}(\R^n)$, we obtain that for each $i\in\{1,\dots,n-1\}$
$$
\langle\nabla C_i(x),X(x)\rangle=\sum_{j=1}^{n}\partial_{x_j}C_i \cdot\dot{x}_{j}=0,
$$
for every $x=(x_1,\dots,x_n)\in\Omega$, where $\langle \cdot,\cdot\rangle$ stand for the canonical inner product on $\R^n$, and respectively $\nabla$ stand for the gradient with respect to $\langle \cdot,\cdot\rangle$.

Hence, by a standard multilinear algebra argument, the $\mathcal{C}^{1}$ vector field $X$ is given as the $\mathcal{C}^{1}$ vector field $\star (\nabla C_1\wedge\dots\wedge \nabla C_{n-1})$ multiplied by a $\mathcal{C}^{1}$ real function (rescaling function), where $\star$ stand for the Hodge star operator for multivector fields (see for details e.g. \cite{darling}). It may happen that the domain of definition for the rescaling function to be a proper subset of $\Omega$. In the following we will consider the generic case when the rescaling function is defined on an open and dense subset of $\Omega$. In order to simplify the notations, we will also denote this set by $\Omega$.  

Consequently, the vector field $X$ can be realized on the open set $\Omega\subseteq \R^n$ as the Hamilton-Poisson vector field $X_H \in\mathfrak{X}(\Omega)$ with respect to the Hamiltonian function $H:=C_{n-1}$ and respectively the Poisson bracket of class $\mathcal{C}^{1}$ defined by:
$$
\{f,g\}_{\nu;C_1,\dots,C_{n-2}}dx_1\wedge\dots\wedge dx_n=\nu dC_1\wedge\dots dC_{n-2}\wedge df\wedge dg,
$$
where $\nu\in\mathcal{C}^{1}(\Omega,\R)$ is a given real function (rescaling). For $\nu\equiv 1$, the associated Poisson bracket in the smooth category, it is exactly the Flaschka-Ra\c tiu bracket. For similar Hamilton-Poisson formulations of completely integrable systems see also \cite{g1}, \cite{g2}.

In coordinates, the bracket $\{f,g\}_{\nu;C_1,\dots,C_{n-2}}$ is given by:
$$
\{f,g\}_{\nu;C_1,\dots,C_{n-2}}=\nu \cdot \dfrac{\partial(C_1,\dots,C_{n-2},f,g)}{\partial(x_1,\dots,x_n)}.
$$

Note that $\{C_1,\dots,C_{n-2}\}$ is a complete set of Casimirs for the Poisson bracket $\{\cdot,\cdot\}_{\nu;C_1,\dots,C_{n-2}}$.

Recall that the Hamiltonian vector field $X_H\in\mathfrak{X}(\Omega)$ is acting on an arbitrary real function $f\in \mathcal{C}^{k}(\Omega,\R)$, $k\geq 2$ as: 
$$X_{H}(f)=\{f,H\}_{\nu;C_1,\dots,C_{n-2}}\in \mathcal{C}^{1}(\Omega,\R).$$

Hence, the differential system \eqref{sys} can be locally written in $\Omega$ as a Hamilton-Poisson dynamical system of the type:
$$\left\{\begin{array}{l}
\dot x_{1}=\{x_1,H\}_{\nu;C_1,\dots,C_{n-2}}\\
\dot x_{2}=\{x_2,H\}_{\nu;C_1,\dots,C_{n-2}}\\
\cdots \\
\dot x_{n}=\{x_n,H\}_{\nu;C_1,\dots,C_{n-2}},\\
\end{array}\right.
$$
or equivalently
\begin{equation}\label{sysham}
\left\{\begin{array}{l}
\dot x_{1}=\nu \cdot \dfrac{\partial(C_1,\dots,C_{n-2},x_1,H)}{\partial(x_1,\dots,x_n)}\\
\dot x_{2}=\nu \cdot \dfrac{\partial(C_1,\dots,C_{n-2},x_2,H)}{\partial(x_1,\dots,x_n)}\\
\cdots \\
\dot x_{n}=\nu \cdot \dfrac{\partial(C_1,\dots,C_{n-2},x_n,H)}{\partial(x_1,\dots,x_n)}.\\
\end{array}\right.
\end{equation}

Consequently, the components of the vector field $X=X_1 \partial_{x_1}+\dots+X_n \partial_{x_n}$ which generates the differential system \eqref{sys}, are given in $\Omega$ as follows:
$$X_i=\nu \cdot \dfrac{\partial(C_1,\dots,C_{n-2},x_i,H)}{\partial(x_1,\dots,x_n)},$$
for $i\in\{1,\dots,n\}$.

Next result gives a method to construct a divergence free vector field out of the vector field $X$. The divergence operator we will use in this approach is the divergence associated with the standard Lebesgue measure on $\R^n$, namely $\mathcal{L}_{X}dx_1\wedge\dots\wedge dx_n=\operatorname{div}{X}dx_1\wedge\dots\wedge dx_n$, where $\mathcal{L}_{X}$ stand for the Lie derivative along the vector field $X$.

\begin{theorem}\label{divfree}
The vector field $\widetilde {X}:=\dfrac{1}{\nu}\cdot X$ is a divergence free vector field on $\Omega \setminus \mathcal{Z}(\nu)$, where $\mathcal{Z}(\nu)=\{(x_1,\dots,x_n)\in\Omega \mid \nu(x_1,\dots,x_n)=0\}$.
\end{theorem}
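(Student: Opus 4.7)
My plan is to exploit the Hamilton-Poisson expression for the components of $X$ derived just above the theorem, and to recognize $\widetilde X$ as (up to a sign) the vector field associated via the standard Euclidean volume form $\omega:=dx_1\wedge\cdots\wedge dx_n$ to the $(n-1)$-form $dC_1\wedge\cdots\wedge dC_{n-2}\wedge dH$. Divergence-freeness will then drop out of Cartan's magic formula together with $d^2=0$; the restriction to $\Omega\setminus\mathcal{Z}(\nu)$ is forced merely by the fact that $\widetilde X=\nu^{-1}X$.

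Concretely, substituting the formula $X_i=\nu\cdot\partial(C_1,\dots,C_{n-2},x_i,H)/\partial(x_1,\dots,x_n)$ into the definition of $\widetilde X$ gives, on $\Omega\setminus\mathcal{Z}(\nu)$,
$$\widetilde X_i=\frac{\partial(C_1,\dots,C_{n-2},x_i,H)}{\partial(x_1,\dots,x_n)}.$$
Using the elementary identity $dx_i\wedge i_Y\omega=Y_i\,\omega$, valid for every vector field $Y$ (just contract $0=dx_i\wedge\omega$ with $Y$), and reordering factors, one then verifies
$$i_{\widetilde X}\,\omega=(-1)^{n-2}\,dC_1\wedge\cdots\wedge dC_{n-2}\wedge dH.$$
Indeed, wedging both sides with $dx_i$ produces $\widetilde X_i\,\omega$ on the left and, after moving $dx_i$ into the slot between $dC_{n-2}$ and $dH$, exactly $(-1)^{n-2}\widetilde X_i\,\omega$ on the right; since an $(n-1)$-form annihilated by $dx_i\wedge(\cdot)$ for every $i$ must vanish, the identity follows.

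Cartan's formula then yields $\mathcal{L}_{\widetilde X}\omega=d(i_{\widetilde X}\omega)+i_{\widetilde X}(d\omega)=0$, because $d\omega=0$ (top form on $\R^n$) and $d(dC_1\wedge\cdots\wedge dC_{n-2}\wedge dH)=0$ by $d^2=0$ on each factor. Since $\mathcal{L}_{\widetilde X}\omega=(\operatorname{div}\widetilde X)\,\omega$ by the very definition of divergence recalled above the theorem, this gives $\operatorname{div}\widetilde X\equiv 0$ on $\Omega\setminus\mathcal{Z}(\nu)$. The only slightly delicate step is the sign bookkeeping in the middle identity, but it is purely mechanical; a less elegant alternative would be to differentiate the determinant defining $\widetilde X_i$ term by term, sum over $i$, and observe that every surviving summand pairs a symmetric mixed second derivative of some $C_k$ (or of $H$) against an antisymmetric Leibniz sign, forcing cancellation.
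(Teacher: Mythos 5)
Your argument is correct, but it proves the theorem by a genuinely different route than the paper. The paper stays entirely inside determinant calculus: it expands $\widetilde X_i=\partial(C_1,\dots,C_{n-2},x_i,H)/\partial(x_1,\dots,x_n)$ along the $x_i$-row to get $(-1)^{i+n-1}$ times an $(n-1)\times(n-1)$ minor, applies the Leibniz rule for differentiating a determinant, and checks that each of the resulting sums
$$S_k=\sum_{i=1}^n(-1)^{i+n-1}\frac{\partial(C_1,\dots,\partial_{x_i}C_k,\dots,C_{n-2},H)}{\partial(x_1,\dots,\widehat{x}_i,\dots,x_n)}$$
vanishes by pairing the symmetric second derivatives $\partial^2_{x_jx_i}C_k$ against antisymmetric signs --- precisely the ``less elegant alternative'' you sketch in your final sentence. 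Your main argument instead realizes $\widetilde X$ as the vector field dual, via $\omega$, to the closed $(n-1)$-form $dC_1\wedge\cdots\wedge dC_{n-2}\wedge dH$, and gets $\operatorname{div}\widetilde X=0$ from Cartan's formula and $d^2=0$. I checked your sign: expanding the Jacobian along the $x_j$-row gives $\widetilde X_j=(-1)^{n-1+j}M_j$ with $M_j$ the corresponding minor, while $i_Y\omega=dC_1\wedge\cdots\wedge dH$ forces $Y_j=(-1)^{j-1}M_j$, so $\widetilde X=(-1)^nY$ and indeed $(-1)^n=(-1)^{n-2}$; note, though, that the exact sign is immaterial to the conclusion --- all you need is that $i_{\widetilde X}\omega$ is closed. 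There is one small slip in your verification sentence: wedging the right-hand side $(-1)^{n-2}\,dC_1\wedge\cdots\wedge dH$ with $dx_i$ gives $(-1)^{2(n-2)}\widetilde X_i\,\omega=\widetilde X_i\,\omega$, which matches the left-hand side; as written, your ``$(-1)^{n-2}\widetilde X_i\,\omega$ on the right'' would contradict the left for odd $n$, so the prose should be adjusted even though the claimed identity is correct. What your route buys is that the entire cancellation is packaged once and for all in $d(dC_k)=0$, which also makes the role of the $\mathcal{C}^2$ hypothesis on $C_1,\dots,C_{n-2},H$ transparent; what the paper's computation buys is self-containedness and explicit coordinate formulas, at the cost of heavier index bookkeeping.
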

\begin{proof}
Note that the components of the vector field $\widetilde {X}=\widetilde{X}_1 \partial_{x_1}+\dots+\widetilde{X}_n \partial_{x_n}$ are given by:
$$
\widetilde{X}_i=\dfrac{\partial(C_1,\dots,C_{n-2},x_i,H)}{\partial(x_1,\dots,x_n)},
$$
for $i\in\{1,\dots,n\}$. By definition, the vector field $\widetilde{X}$ is a Hamilton-Poisson vector field with respect to the Flaschka-Ra\c tiu bracket, and having the same Hamiltonian $H$ as the vector field $X$.

Hence, the divergence of $\widetilde{X}$ is given by:
\begin{align*}
\operatorname{div}(\widetilde{X})&=\sum_{i=1}^{n}\partial_{x_i} \widetilde{X}_i=\sum_{i=1}^{n}\partial_{x_i}\dfrac{\partial(C_1,\dots,C_{n-2},x_i,H)}{\partial(x_1,\dots,x_n)}\\
&=\sum_{i=1}^{n}(-1)^{i+n-1}\partial_{x_i}\dfrac{\partial(C_1,\dots,C_{n-2},H)}{\partial(x_1,\dots,\widehat{x}_{i},\dots,x_n)},
\end{align*}
where the notation "$\widehat{x}_i$" means that "$x_i$" is omitted. 

Let us now analyze the general term in the above sum. By using the derivative of a determinant we obtain the following:
\begin{align*}
\partial_{x_i}\dfrac{\partial(C_1,\dots,C_{n-2},H)}{\partial(x_1,\dots,\widehat{x}_{i},\dots,x_n)}&=\dfrac{\partial(\partial_{x_i}C_1,\dots,C_{n-2},H)}{\partial(x_1,\dots,\widehat{x}_{i},\dots,x_n)}+\dots+\dfrac{\partial(C_1,\dots,\partial_{x_i}C_{n-2},H)}{\partial(x_1,\dots,\widehat{x}_{i},\dots,x_n)}\\
&+\dfrac{\partial(C_1,\dots,C_{n-2},\partial_{x_i}H)}{\partial(x_1,\dots,\widehat{x}_{i},\dots,x_n)}.
\end{align*}
Hence,
\begin{align*}
&\operatorname{div}(\widetilde{X})=\sum_{i=1}^{n}(-1)^{i+n-1}( \dfrac{\partial(\partial_{x_i}C_1,\dots,C_{n-2},H)}{\partial(x_1,\dots,\widehat{x}_{i},\dots,x_n)}+\dots+\dfrac{\partial(C_1,\dots,\partial_{x_i}C_{n-2},H)}{\partial(x_1,\dots,\widehat{x}_{i},\dots,x_n)}\\
&+\dfrac{\partial(C_1,\dots,C_{n-2},\partial_{x_i}H)}{\partial(x_1,\dots,\widehat{x}_{i},\dots,x_n)} )\\
&=\sum_{i=1}^{n}(-1)^{i+n-1}\dfrac{\partial(\partial_{x_i}C_1,\dots,C_{n-2},H)}{\partial(x_1,\dots,\widehat{x}_{i},\dots,x_n)}+\dots+
\sum_{i=1}^{n}(-1)^{i+n-1}\dfrac{\partial(C_1,\dots,\partial_{x_i}C_{n-2},H)}{\partial(x_1,\dots,\widehat{x}_{i},\dots,x_n)}\\
&+\sum_{i=1}^{n}(-1)^{i+n-1}\dfrac{\partial(C_1,\dots,C_{n-2},\partial_{x_i}H)}{\partial(x_1,\dots,\widehat{x}_{i},\dots,x_n)}.
\end{align*}
Next we prove that each of the above sums vanishes. In order to do that, it is enough to show that the general sum $S_{k}$ vanishes, where
$$
S_{k}:=\sum_{i=1}^{n}(-1)^{i+n-1}\dfrac{\partial(C_1,\dots,C_{k-1},\partial_{x_i}C_{k},C_{k+1},\dots,C_{n-2},H)}{\partial(x_1,\dots,\widehat{x}_{i},\dots,x_n)}.
$$ 
Indeed, we obtain that:
\begin{align*}
&S_{k}=\sum_{i=1}^{n}(-1)^{i+n-1}\sum_{j=1}^{i-1}(-1)^{j+k}\partial^{2}_{x_j x_i}C_{k} \cdot \dfrac{\partial(C_1,\dots,\widehat{C}_k,\dots,C_{n-2},H)}{\partial(x_1,\dots,\widehat{x}_j,\dots,\widehat{x}_i,\dots,x_n)}\\
&+\sum_{i=1}^{n}(-1)^{i+n-1}\sum_{j=i+1}^{n}(-1)^{j+k-1}\partial^{2}_{x_j x_i}C_{k} \cdot \dfrac{\partial(C_1,\dots,\widehat{C}_k,\dots,C_{n-2},H)}{\partial(x_1,\dots,\widehat{x}_i,\dots,\widehat{x}_j,\dots,x_n)}\\
&=\sum_{i=1}^{n}(-1)^{i+n-1}\sum_{j=1}^{i-1}(-1)^{j+k}\partial^{2}_{x_j x_i}C_{k} \cdot \dfrac{\partial(C_1,\dots,\widehat{C}_k,\dots,C_{n-2},H)}{\partial(x_1,\dots,\widehat{x}_j,\dots,\widehat{x}_i,\dots,x_n)}\\
&+\sum_{j=1}^{n}(-1)^{j+n-1}\sum_{i=j+1}^{n}(-1)^{i+k-1}\partial^{2}_{x_i x_j}C_{k} \cdot \dfrac{\partial(C_1,\dots,\widehat{C}_k,\dots,C_{n-2},H)}{\partial(x_1,\dots,\widehat{x}_j,\dots,\widehat{x}_i,\dots,x_n)}\\
&=(-1)^{n+k-1}\sum_{1\leq j<i\leq n}[(-1)^{i+j}\partial^{2}_{x_j x_i}C_{k} +(-1)^{i+j-1}\partial^{2}_{x_i x_j}C_{k}]\cdot\dfrac{\partial(C_1,\dots,\widehat{C}_k,\dots,C_{n-2},H)}{\partial(x_1,\dots,\widehat{x}_j,\dots,\widehat{x}_i,\dots,x_n)}\\
&=0.
\end{align*}
\end{proof} 
\begin{remark}
In the symplectic case, each Hamiltonian vector field is divergence free with respect to the divergence operator defined by the Liouville volume form.
\end{remark}

\section{A unified linear formulation of integrable systems}

In this section we give a unified linear formulation for the $n$ dimensional differential systems admitting a set of $n-1$ independent conservation laws. The construction of this linear formulation makes use explicitly of the Hamilton-Poisson realization \eqref{sysham} of a differential system of type \eqref{sys}.

Let us now state the main result of this article. The notations and respectively the hypothesis are supposed to be the same as in the previous section.

\begin{theorem}\label{mainresult}
Let \eqref{sys} be a $\mathcal{C}^{1}$ differential system having a set of $n-1$ independent $\mathcal{C}^2$ conservation laws defined on an open subset $\Omega\subseteq \R^n$. Assume that there exists a $\mathcal{C}^1$ rescaling function $\nu$, nonzero on an open and dense subset $\Omega_0$ of $\Omega$, such that the system \eqref{sys} admits a Hamilton-Poisson realization of the type \eqref{sysham} and the Lebesgue measure of the set $$\mathcal{O}:=\left\{ x=(x_1,\dots,x_n)\in \Omega_{0} \mid \operatorname{div}(X)(x)\cdot\dfrac{\partial(1/\nu,C_{1},\dots,C_{n-2},H)}{\partial(x_1,\dots,x_n)}(x)=0\right\}$$ in $\Omega_{0}$ is zero. 

Then, the change of variables $((x_1,\dots,x_n),t)\mapsto ((u_1,\dots,u_n),s)$ given by
\begin{equation*}
\left\{\begin{array}{l}
u_1 =1/\nu(x_1,\dots ,x_n)\\
u_2 =C_1(x_1,\dots ,x_n)/\nu(x_1,\dots ,x_n)\\
\cdots \\
u_{n-1} =C_{n-2}(x_1,\dots ,x_n)/\nu(x_1,\dots ,x_n)\\
u_n =H(x_1,\dots ,x_n)/\nu(x_1,\dots ,x_n)\\
ds=-\operatorname{div}(X)dt,
\end{array}\right.
\end{equation*}
in the open and dense subset $\Omega_{00}:=\Omega_{0}\setminus \mathcal{O}$ of $\Omega_0$, transforms the system \eqref{sysham} restricted to $\Omega_{00}$ into the linear differential system $u^\prime_1=u_1, \ u^\prime_2=u_2, \dots, \ u^\prime_n=u_n$, where "prime" stand for the derivative with respect to the new time "$s$". 
\end{theorem}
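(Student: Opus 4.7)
The plan is to verify directly that, under the stated substitution and the time reparametrization $ds=-\operatorname{div}(X)\,dt$, each component satisfies $du_k/ds=u_k$ on $\Omega_{00}$. Before that I would confirm that $\Omega_{00}$ is an honest open and dense domain on which the map is a valid change of variables. Both $\operatorname{div}(X)$ and the determinant $\partial(1/\nu,C_1,\dots,C_{n-2},H)/\partial(x_1,\dots,x_n)$ are continuous, so $\mathcal{O}$ is closed, and the measure-zero hypothesis forces $\Omega_{00}=\Omega_{0}\setminus\mathcal{O}$ to be open and dense in $\Omega_0$. A quick row reduction on the Jacobian of $x\mapsto(1/\nu,C_1/\nu,\dots,C_{n-2}/\nu,H/\nu)$, subtracting a suitable multiple of the first row $\nabla(1/\nu)$ from each of the remaining rows, gives
\begin{equation*}
\frac{\partial(u_1,\dots,u_n)}{\partial(x_1,\dots,x_n)}=\frac{1}{\nu^{n-1}}\cdot\frac{\partial(1/\nu,C_1,\dots,C_{n-2},H)}{\partial(x_1,\dots,x_n)},
\end{equation*}
which is nonzero on $\Omega_{00}$; similarly $\operatorname{div}(X)\neq 0$ there, so the time change is nondegenerate.

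The key dynamical identity is
\begin{equation*}
X(\nu)=\nu\cdot\operatorname{div}(X)\qquad\text{on }\Omega_0,
\end{equation*}
which follows at once from Theorem \ref{divfree}: expanding $0=\operatorname{div}(\widetilde{X})=\operatorname{div}(X/\nu)=\operatorname{div}(X)/\nu-X(\nu)/\nu^2$ and clearing denominators yields the claim. Along an orbit this reads $\dot\nu=\nu\operatorname{div}(X)$, equivalently $\tfrac{d}{dt}(1/\nu)=-\operatorname{div}(X)/\nu$.

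Combining this identity with the conservation laws $\dot{C}_1=\cdots=\dot{C}_{n-2}=\dot{H}=0$, the product rule immediately gives
\begin{align*}
\dot u_1 &= \tfrac{d}{dt}(1/\nu)=-u_1\operatorname{div}(X),\\
\dot u_k &= \tfrac{\dot C_{k-1}}{\nu}+C_{k-1}\tfrac{d}{dt}(1/\nu)=-u_k\operatorname{div}(X),\quad 2\le k\le n-1,\\
\dot u_n &= \tfrac{\dot H}{\nu}+H\tfrac{d}{dt}(1/\nu)=-u_n\operatorname{div}(X).
\end{align*}
The chain rule $u'_k=\dot u_k\cdot(dt/ds)=\dot u_k/(-\operatorname{div}(X))$, valid on $\Omega_{00}$ since $\operatorname{div}(X)\neq 0$ there, then yields $u'_k=u_k$ for every $k$.

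The hard part will be the bookkeeping of the first paragraph: one must recognize that the determinant in the definition of $\mathcal{O}$ is, up to the nonvanishing factor $\nu^{-(n-1)}$, precisely the Jacobian of the spatial change of variables, so that the hypothesis on $\mathcal{O}$ simultaneously guarantees local invertibility of $x\mapsto u(x)$ and nondegeneracy of the time change. Once this is in place, the dynamical computation in the last two paragraphs is a short application of the product rule, invoking Theorem \ref{divfree} only once via the identity $X(\nu)=\nu\operatorname{div}(X)$.
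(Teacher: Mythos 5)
Your proposal is correct and follows essentially the same route as the paper: the same factorization of the Jacobian as $\nu^{-(n-1)}$ times the determinant appearing in $\mathcal{O}$, the same key identity $\langle\nabla(1/\nu),X\rangle=-\operatorname{div}(X)/\nu$ extracted from Theorem~\ref{divfree}, and the same product-rule computation followed by the time reparametrization. No substantive differences to report.
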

\begin{proof}
Let us start by recalling that the change of variables $(x_1,\dots,x_n)\longmapsto (u_1,\dots,u_n)$ is of class $\mathcal{C}^1$ on $\Omega_{00}$. More exactly, the change of variables is defined on a wider set, namely $\Omega_{0}\setminus \mathcal{E}$, where
\begin{align*}
\mathcal{E}:&=\left\{ x=(x_1,\dots,x_n)\in \Omega_{0} \mid \dfrac{\partial(u_1,\dots,u_n)}{\partial(x_1,\dots,x_n)}(x)=0\right\}\\
&=\left\{ x=(x_1,\dots,x_n)\in \Omega_{0} \mid (1/\nu)^{n-1}(x)\cdot\dfrac{\partial(1/\nu,C_{1},\dots,C_{n-2},H)}{\partial(x_1,\dots,x_n)}(x)=0\right\}\\
&=\left\{ x=(x_1,\dots,x_n)\in \Omega_{0} \mid \dfrac{\partial(1/\nu,C_{1},\dots,C_{n-2},H)}{\partial(x_1,\dots,x_n)}(x)=0\right\}.
\end{align*}
In order to determine the transformed differential system \eqref{sys} (generated by $X$) through this change of variables, we need the following result. Recall by Theorem \eqref{divfree} that the vector filed $1/\nu \cdot X$ is a divergence free vector field on $\Omega_{00}$. Since
\begin{align*}
\operatorname{div}(1/\nu \cdot X)=\langle\nabla(1/\nu),X\rangle + 1/\nu \cdot\operatorname{div}(X),
\end{align*}
and $\operatorname{div}(1/\nu \cdot X)=0$, we obtain that:
\begin{equation}\label{util}
\langle\nabla(1/\nu),X\rangle =-1/\nu \cdot\operatorname{div}(X).
\end{equation}
After the change of variables in $\Omega_{00}$, the system \eqref{sys} becomes:
\begin{equation}\label{systrans}
\left\{\begin{array}{l}
\dfrac{du_{1}}{dt}=\dfrac{d}{dt}(1/\nu)=\langle\nabla(1/\nu),X\rangle=- \operatorname{div}(X)\cdot 1/\nu =- \operatorname{div}(X)\cdot u_1\\
\dfrac{du_{2}}{dt}=\dfrac{d}{dt}(1/\nu \cdot C_1)=C_{1}\cdot \langle\nabla(1/\nu),X \rangle + 1/\nu \cdot \langle \nabla C_1,X\rangle=- \operatorname{div}(X)\cdot u_2\\
\cdots\\
\dfrac{du_{n-1}}{dt}=\dfrac{d}{dt}(1/\nu \cdot C_{n-2})=C_{n-2}\cdot \langle\nabla(1/\nu),X \rangle + 1/\nu \cdot \langle \nabla C_{n-2},X\rangle=- \operatorname{div}(X)\cdot u_{n-1}\\
\dfrac{du_{n}}{dt}=\dfrac{d}{dt}(1/\nu \cdot H)=H \cdot \langle\nabla(1/\nu),X \rangle + 1/\nu \cdot \langle \nabla H,X\rangle=- \operatorname{div}(X)\cdot u_n,\\
\end{array}\right.
\end{equation}
where we used the relation \eqref{util} and the fact that $C_1,\dots,C_{n-2},H$ are conservation laws for the vector field $X$, and consequently 
$$
\langle \nabla C_{1},X\rangle=\dots=\langle \nabla C_{n-2},X\rangle=\langle \nabla H,X\rangle=0.
$$
Now using the new time transformation $ds=-\operatorname{div}(X)dt$ on $\Omega_{00}$, the system \eqref{systrans} becomes:
\begin{equation*}
\left\{\begin{array}{l}
u^\prime_1=u_1\\
u^\prime_2=u_2\\
\cdots\\
u^\prime_n=u_n,\\
\end{array}\right.
\end{equation*}
where $u^\prime_i=\dfrac{du_{i}}{ds}$, for $i\in\{1,\dots,n\}$.
\end{proof}

\begin{remark}\label{okrem}
If the rescaling function $\nu=:\nu_{cst.}$ is a constant function, then the Lebesgue measure of the set $\mathcal{O}=\Omega_0=\Omega$ is nonzero in $\Omega_0$, and hence the assumptions of the Theorem \eqref{mainresult} do not hold. In this case, we search for a new $\mathcal{C}^1$ rescaling function $\mu$ defined on an open and dense subset $\Omega_0$ of $\Omega$, such that the vector field $\mu \cdot X$ satisfies the assumptions of the Theorem \eqref{mainresult}. The function $\mu$ satisfies:
$$
\operatorname{div}(\mu \cdot X)=\langle\nabla\mu,X\rangle + \mu \cdot\operatorname{div}(X)=\langle\nabla\mu,X\rangle,
$$
since in the case of a constant function $\nu=\nu_{cst.}$ we obtain from Theorem \eqref{divfree} that
$$
0=\operatorname{div}(1/\nu_{cst.} \cdot X)=(1/\nu_{cst.}) \cdot\operatorname{div}(X),
$$
and hence $\operatorname{div}(X)=0$. 

Consequently, we have to search for a rescaling function $\mu$ such that $$\operatorname{div}(\mu \cdot X)=\langle\nabla\mu,X\rangle$$ it is not identically zero in $\Omega_0$.

The second condition that $\mu$ has to satisfy is that the function $$\dfrac{\partial(1/\mu,C_{1},\dots,C_{n-2},H)}{\partial(x_1,\dots,x_n)}$$ it is not identically zero in $\Omega_0$.

The transformation between the differential system \eqref{sys} and the differential system generated by the vector field $\mu \cdot X$ is done by using the new time transformation $dt=\mu(x)dt^{\prime}$. More exactly, the differential system \eqref{sys} generated by $X$, namely:
\begin{equation*}
\dfrac{dx}{dt}=X(x),
\end{equation*}
is transformed via the new time transformation $dt=\mu(x)dt^{\prime}$ into the system:
\begin{equation}\label{sysy}
\dfrac{dx}{dt^{\prime}}=\mu(x)\cdot X(x).\\
\end{equation}
The differential system \eqref{sysy} can also be realized as a Hamilton-Poisson dynamical system with respect to the Poisson bracket $\{\cdot,\cdot\}_{\mu\cdot\nu_{cst.};C_1,\dots,C_{n-2}}$ and respectively the same Hamiltonian $H$ as for the Hamilton-Poisson realization \eqref{sysham} of the vector field $X$.
\end{remark}

\section{Examples}

In this section we will apply the main result of this article in the case of two concrete dynamical systems, namely a 3D Lotka-Volterra system and respectively Euler's equations of the free rigid body dynamics.

Let us start with the Lotka-Volterra system. The 3D Lotka-Volterra system we consider (see e.g. \cite{lvs}), is described by the following differential system:
\begin{equation}\label{LV}
\left\{\begin{array}{l}
\dfrac{dx_{1}}{dt}=x_1 (x_2 +x_3)\\
\dfrac{dx_{2}}{dt}=x_2 (-x_1 +x_3)\\
\dfrac{dx_{3}}{dt}=x_ 3(-x_1 -x_2).\\
\end{array}\right.
\end{equation}
If one denote: $$X=[x_1 (x_2 +x_3)]\partial_{x_1}+[x_2 (-x_1 +x_3)]\partial_{x_2}+[x_3 (-x_1 -x_2)]\partial_{x_3},$$ then $\operatorname{div}(X)=2(x_3-x_1)$.

The system \eqref{LV} admits a Hamilton-Poisson realization of the type \eqref{sysham}, where:
\begin{align*}
&\nu(x_1,x_2,x_3)=-\dfrac{x_1^{2} x_3^{2}}{x_1 +x_2 +x_3},\\
&C(x_1,x_2,x_3)=\dfrac{x_2(x_1 +x_2 +x_3)}{x_1 x_3},\\
&H(x_1,x_2,x_3)=x_1 +x_2 +x_3.
\end{align*}
The sets introduced in Theorem \eqref{mainresult} in the case of the Lotka-Volterra system \eqref{LV} are given by:
\begin{align*}
&\Omega=\{(x_1,x_2,x_3)\in\R^3 \mid x_1 x_3 \neq 0\},\\
&\Omega_{0}=\{(x_1,x_2,x_3)\in\R^3 \mid x_1 x_3 \neq 0;\ x_1+x_2 +x_3 \neq 0\},\\
&\mathcal{O}=\{(x_1,x_2,x_3)\in\R^3 \mid x_1 =x_3;\ x_1 x_3 \neq 0;\ x_1 +x_2 + x_3 \neq 0\},\\
&\Omega_{00}=\{(x_1,x_2,x_3)\in\R^3 \mid x_1\neq x_3;\ x_1 x_3 \neq 0; x_1+x_2 +x_3 \neq 0\}.
\end{align*}
Then by Theorem \eqref{mainresult}, the change of variables $((x_1,x_2,x_3),t)\mapsto ((u_1,u_2,u_3),s)$ defined by:
\begin{equation*}
\left\{\begin{array}{l}
u_1 =-\dfrac{x_1 +x_2 +x_3}{x_1^2 x_3^2}\\
u_2 =-\dfrac{x_2 (x_1 +x_2 +x_3)^2}{x_1^3 x_3^3}\\
u_3 =-\dfrac{(x_1 +x_2 +x_3)^2}{x_1^2 x_3^2}\\
ds=2(x_1-x_3)dt,
\end{array}\right.
\end{equation*}
for $(x_1,x_2,x_3)\in\Omega_{00}$ transforms the Lotka-Volterra system \eqref{LV} into the linear differential system:
\begin{equation*}
\left\{\begin{array}{l}
\dfrac{du_{1}}{ds}=u_1\\
\dfrac{du_{2}}{ds}=u_2\\
\dfrac{du_{3}}{ds}=u_3.\\
\end{array}\right.
\end{equation*}

\bigskip
\bigskip
Let us now analyze the Euler equations from the free rigid body dynamics (see e.g. \cite{holm1}, \cite{holm2}, \cite{holm3}, \cite{marsdenratiu}, \cite{ratiurazvan}). Recall that the Euler equations from the free rigid body dynamics are given by the differential system:
\begin{equation}\label{Ee}
\left\{\begin{array}{l}
\dfrac{dx_{1}}{dt}=\dfrac{I_2 -I_3}{I_2 I_3} x_2 x_3\\
\dfrac{dx_{2}}{dt}=\dfrac{I_3 -I_1}{I_1 I_3} x_1 x_3\\
\dfrac{dx_{3}}{dt}=\dfrac{I_1 -I_2}{I_1 I_2} x_1 x_2,\\
\end{array}\right.
\end{equation}
where the nonzero real numbers $I_1,I_2,I_3$ are the components of the inertia tensor. In the following we consider the case when $I_2 \neq I_3$.

If one denote: $$X=(\dfrac{I_2 -I_3}{I_2 I_3} x_2 x_3)\partial_{x_1}+(\dfrac{I_3 -I_1}{I_1 I_3} x_1 x_3)\partial_{x_2}+(\dfrac{I_1 -I_2}{I_1 I_2} x_1 x_2)\partial_{x_3},$$ then $\operatorname{div}(X)=0$, and hence the assumptions of the Theorem \eqref{mainresult} do not hold. 

The system \eqref{Ee} admits a Hamilton-Poisson realization of type \eqref{sysham}, where:
\begin{align*}
&\nu(x_1,x_2,x_3)=:\nu_{cst.}(x_1,x_2,x_3)=-1,\\
&C(x_1,x_2,x_3)=\dfrac{1}{2}(x_1^2 +x_2^2 +x_3^2),\\
&H(x_1,x_2,x_3)=\dfrac{1}{2}(\dfrac{x_1^2}{I_1} +\dfrac{x_2^2}{I_2} +\dfrac{x_3^2}{I_3}).
\end{align*}

In order to correct the vector field $X$ such that one can apply the Theorem \eqref{mainresult}, we use the Remark \eqref{okrem} and consequently  choose a rescaling function $\mu(x_1,x_2,x_3)=x_1$, and the associated new time transformation $dt=\mu(x_1,x_2,x_3) dt^{\prime}$ defined on the open and dense subset of $\R^3$ given by $\{(x_1,x_2,x_3)\in\R^3 \mid x_1 \neq 0\}$.

This new time transformation, transforms the system \eqref{Ee} into the differential system:
\begin{equation}\label{EC}
\left\{\begin{array}{l}
\dfrac{dx_{1}}{d t^\prime}=\dfrac{I_2 -I_3}{I_2 I_3} x_1 x_2 x_3\\
\dfrac{dx_{2}}{d t^\prime}=\dfrac{I_3 -I_1}{I_1 I_3} x_1^2 x_3\\
\dfrac{dx_{3}}{d t^\prime}=\dfrac{I_1 -I_2}{I_1 I_2} x_1^2 x_2,\\
\end{array}\right.
\end{equation}
Recall that the vector field which generates the differential system \eqref{EC} is $\mu\cdot X$:
$$\mu\cdot X=(\dfrac{I_2 -I_3}{I_2 I_3} x_1 x_2 x_3)\partial_{x_1}+(\dfrac{I_3 -I_1}{I_1 I_3} x_1^2 x_3)\partial_{x_2}+(\dfrac{I_1 -I_2}{I_1 I_2} x_1^2 x_2)\partial_{x_3}.$$
One note that the divergence of the vector field $\mu\cdot X$ is given by: 
$$
\operatorname{div}(\mu\cdot X)=\dfrac{I_2-I_3}{I_2 I_3}x_2 x_3.
$$
For $I_2 \neq I_3$ we have that $\operatorname{div}(\mu\cdot X)$ it is not identically zero.

The system \eqref{EC} admits a Hamilton-Poisson realization of the type \eqref{sysham}, where:
\begin{align*}
&\nu(x_1,x_2,x_3)=\nu_{cst.}(x_1,x_2,x_3)\cdot \mu(x_1,x_2,x_3)=-x_1,\\
&C(x_1,x_2,x_3)=\dfrac{1}{2}(x_1^2 +x_2^2 +x_3^2),\\
&H(x_1,x_2,x_3)=\dfrac{1}{2}(\dfrac{x_1^2}{I_1} +\dfrac{x_2^2}{I_2} +\dfrac{x_3^2}{I_3}).
\end{align*}
The sets introduced in Theorem \eqref{mainresult} in the case of the system \eqref{EC} are given by:
\begin{align*}
&\Omega=\R^3,\\
&\Omega_{0}=\{(x_1,x_2,x_3)\in\R^3 \mid x_1 \neq 0\},\\
&\mathcal{O}=\{(x_1,x_2,x_3)\in\R^3 \mid x_1 \neq 0;\ x_2 x_3=0\},\\
&\Omega_{00}=\{(x_1,x_2,x_3)\in\R^3 \mid x_1 x_2 x_3\neq 0\}.
\end{align*}
Then by Theorem \eqref{mainresult}, the change of variables $((x_1,x_2,x_3),t^\prime)\mapsto ((u_1,u_2,u_3),s)$ defined by:
\begin{equation*}
\left\{\begin{array}{l}
u_1 =-\dfrac{1}{x_1}\\
u_2 =-\dfrac{x_1^2 +x_2^2 +x_3^2}{2 x_1}\\
u_3 =-\dfrac{x_1}{2 I_1} -\dfrac{x_2^2}{2 x_1 I_2} -\dfrac{x_3^2}{2 x_1 I_3}\\
ds=\dfrac{I_3-I_2}{I_2 I_3}x_2 x_3 dt^\prime,
\end{array}\right.
\end{equation*}
for $(x_1,x_2,x_3)\in\Omega_{00}$ transforms the system \eqref{EC} into the linear differential system:
\begin{equation*}
\left\{\begin{array}{l}
\dfrac{du_{1}}{ds}=u_1\\
\dfrac{du_{2}}{ds}=u_2\\
\dfrac{du_{3}}{ds}=u_3.\\
\end{array}\right.
\end{equation*}

\subsection*{Acknowledgment}
I would like to thank Professor Metin G\"urses for indicating the references \cite{g1}, \cite{g2}.

\end{document}